\titleformat*{\subsection}{\Large\bfseries}
\titleformat*{\subsubsection}{\large\bfseries}
\titleformat*{\paragraph}{\large\bfseries}
\titleformat*{\subparagraph}{\large\bfseries}
\renewcommand{\@seccntformat}[1]{\csname the#1\endcsname. }
\renewenvironment{abstract}{%
    \if@twocolumn
      \section*{\abstractname}%
    \else %% <- here I've removed \small
      \begin{center}%
        {\bfseries \Large\abstractname\vspace{\z@}}%  %% <- here I've added \Large
      \end{center}%
      \quotation
    \fi}
    {\if@twocolumn\else\endquotation\fi}
\theoremstyle{plain}
\newtheorem{thm}{Theorem}
\newtheorem{lem}[thm]{Lemma}
\theoremstyle{definition}
\newtheorem{Conj}[thm]{Conjecture}
\providecommand{\keywords}[1]{{\bf{Keywords:}} #1}
\providecommand{\subjectclass}[2]{\textbf{Mathematics subject classification 2020:} #1}
\title{Additive and multiplicative Gower's Ramsey theorem}
\author{Sayan Goswami \\ \textit{sayangoswami@imsc.res.in} \\ The Institute of Mathematical Sciences\\ A CI of Homi Bhabha National Institute\\ CIT Campus, Taramani, Chennai 600113, India.}
\date{\vspace{-5ex}}
\begin{document}
\maketitle
\begin{abstract}
In \cite[Theorem 1]{key-3}, W. T. Gower generalized Hindman's Finite sum theorem over
$X_{k}=\left\{ \left(n_{1},n_{2},\ldots,n_{k}\right):n_{1}\neq0\right\} $
by showing that for any finite coloring of $X_{k}$ there exists a
sequence such that the Gower subspace generated by that sequence is
monochromatic. For $k=1,$ this immediately gives the finite sum theorem.
In this article we will show that for any finite coloring of $X_{k}$
there exist two sequences $\left\{ \mathbf{n_{i}}:i\in I\right\} $
and $\left\{ \mathbf{m_{i}}:i\in I\right\} $ such that the Gower
subspace generated by $\left\{ \mathbf{n_{i}}:i\in I\right\} $ and
set of all finite products of $\left\{ \mathbf{m_{i}}:i\in I\right\} $
are in a single color. This immediately generalize a result of V.
Bergelson and N. Hindman \cite[Theorem 2.4]{key-1}, which says that for any finite coloring
of $\mathbb{N}$, there exist two sequences $\left(x_{n}\right)_{n}$
and $\left(y_{n}\right)_{n}$ such that the finite sum and product
generated by $\left(x_{n}\right)_{n}$ and $\left(y_{n}\right)_{n}$
are in a same color.
\end{abstract}

\subjectclass{05D10}\\

\keywords Ramsey theory, Gower's theorem \\

One of the major type of study in Ramsey theory deals with monochromatic
structure in finitely colored spaces. Hindman's Finite sum theorem \cite{key-4}
is one of the cornerstone result in Ramsey theory, which says that
for any finite partition of $\mathbb{N}$, there exists a sequence
$\left(x_{n}\right)_{n}$ such that the set of all finite sums of
$\left(x_{n}\right)_{n}$, say $FS\left(\left(x_{n}\right)_{n}\right)=\left\{ \sum_{n\in H}x_{n}:H\in\mathcal{P}_{f}\left(\mathbb{N}\right)\right\} $
is monochromatic. Similar result is true if we consider the product
operation instead of summation. It can be shown that for any finite
coloring of $\mathbb{N}$, we can't get a sequence $\left(x_{n}\right)_{n}$
such that the $FS\left(\left(x_{n}\right)_{n}\right)$ and $FP\left(\left(x_{n}\right)_{n}\right)$
are both in a single color. In \cite[Theorem 2.4]{key-1}, V. Bergelson and N. Hindman
proved that for any finite coloring of $\mathbb{N}$, there exist
two sequences $\left(x_{n}\right)_{n}$ and $\left(y_{n}\right)_{n}$
such that the $FS\left(\left(x_{n}\right)_{n}\right)$ and $FP\left(\left(y_{n}\right)_{n}\right)$
are both in a single color. In \cite[Theorem 1]{key-3}, W. T. Gower provided a generalization
of the Hindman theorem using methods of ultrafilters. To state his result,
we need some basic preliminaries. Let $\mathbb{N}_{0}=\mathbb{N}\cup\left\{ 0\right\} $
and for any $k\in\mathbb{N}$, define the shift map $T:\mathbb{N}_{0}^{k}\rightarrow\mathbb{N}_{0}^{k}$
by $T\left(n_{1},n_{2},\ldots,n_{k}\right)=\left(0,n_{1},n_{2},\ldots,n_{k-1}\right)$.
Let $X_{k}=\mathbb{N}_{0}^{k}\setminus T\mathbb{N}_{0}^{k}=\left\{ \left(n_{1},n_{2},\ldots,n_{k}\right):n_{1}\neq0\right\} $.
Given any subset $A=\left\{ \mathbf{n_{i}}:i\in I\right\} \subset X_{k}$,
the Gower sum subspace generated by $A$ is the set of elements of $\mathbb{N}_{0}^{k}$
of the form $\sum_{j=1}^{k}\sum_{i\in B_{j}}T^{j-1}\mathbf{n_{i}}$,
where $B_{1},B_{2},\ldots,B_{k}$ are disjoint subsets of $I$ and
$B_{1}$is nonempty.
\begin{thm}
If $k,r\in\mathbb{N}$ and $X_{k}=\bigcup_{i=1}^{r}C_{i}$ be a $r$-partition
of $X_{k}$, then there exist a sequence $A=\left\{ \mathbf{n_{i}}:i\in I\right\} \subset X_{k}$
such that the Gower sum subspace generated by $A$ is monochromatic.
\end{thm}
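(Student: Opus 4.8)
The plan is to prove this via the algebra of the Stone-Čech compactification $\beta\mathbb{N}_0^k$, following Gower's own ultrafilter approach. Recall that Hindman's theorem has a clean ultrafilter proof: the semigroup $(\beta\mathbb{N},+)$ contains an idempotent ultrafilter $p$, and any $A\in p$ contains $FS((x_n)_n)$ for a suitable sequence. Here the shift map $T$ is a semigroup homomorphism on $(\mathbb{N}_0^k,+)$, so it extends to a continuous map $\widetilde{T}\colon\beta\mathbb{N}_0^k\to\beta\mathbb{N}_0^k$. The central object will be an ultrafilter on $\beta\mathbb{N}_0^k$ that is idempotent with respect to $+$ and interacts correctly with the tower of shifts $T,T^2,\ldots,T^{k-1}$, so that it sees the full Gower subspace rather than just ordinary finite sums.

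First I would set up the relevant closed subsemigroup $S\subseteq\beta\mathbb{N}_0^k$. The elements of $X_k$ are exactly those $k$-tuples with nonzero first coordinate, and a Gower sum $\sum_{j=1}^k\sum_{i\in B_j}T^{j-1}\mathbf{n_i}$ lands in $X_k$ precisely when $B_1\neq\varnothing$. I would look for an idempotent $p=p+p$ lying in the closure of $X_k$ that is also compatible with the shift, i.e. satisfying a relation tying $p$ to $\widetilde{T}(p)$ across the $k$ layers; the natural candidate is to work inside the smallest ideal or with a minimal idempotent, or to impose that $p$ concentrate on each shifted copy simultaneously. The key structural fact to extract is that membership $A\in p$ forces $A$ to contain a Gower subspace: this is the analogue of the standard lemma that for an idempotent $p$, every $A\in p$ has $A^\star=\{x\in A: -x+A\in p\}\in p$, and one iterates this while peeling off one shift-layer at a time.

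The main inductive engine will be a finite induction on the $k$ layers combined with the usual idempotent-filtration argument within each layer. Concretely, given the monochromatic piece $C_i\in p$, I would build the sequence $\mathbf{n_1},\mathbf{n_2},\ldots$ greedily: having chosen $\mathbf{n_1},\ldots,\mathbf{n_{t}}$ so that every Gower sum formed from them lies in $C_i$, I would intersect $C_i$ with finitely many translates of the form $-v+C_i$ (one for each partial Gower sum $v$ built so far, across all admissible assignments of indices to the blocks $B_1,\ldots,B_k$) and with the shift-pullbacks $\widetilde{T}^{-(j-1)}$ of these sets, and use $p=p+p$ together with the shift-compatibility to show this intersection is still in $p$, hence nonempty in $X_k$, yielding the next term $\mathbf{n_{t+1}}$. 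The finiteness of the blockings at each stage keeps all intersections inside $p$.

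The hard part will be pinning down the correct compatibility relation between the idempotent $p$ and the shift map $\widetilde{T}$, and proving that such an idempotent exists. Plain idempotency of $+$ only delivers ordinary finite sums $FS$, corresponding to $k=1$; to capture the extra freedom of distributing indices into the higher layers $T^{j-1}$, the ultrafilter must "remember" that it is supported on tuples that remain generic after shifting, and the closed set it lives in must be a subsemigroup stable under the algebra generated by $+$ and $\widetilde{T}$. I expect the cleanest route is to verify that $\overline{X_k}$ (or an appropriate $T$-invariant closed subsemigroup thereof) is a compact right-topological semigroup under $+$, invoke the Ellis–Numakura lemma to get an idempotent there, and then check by a direct computation that this idempotent's defining $A^\star$-sets absorb the shifted increments correctly. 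Verifying that the greedy construction never leaves $X_k$—i.e. that the first coordinate stays nonzero, which is what the condition $B_1\neq\varnothing$ encodes—is the place where the shift-compatibility must be used most carefully.
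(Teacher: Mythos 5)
Your overall strategy coincides with the one the paper (following Gowers) relies on: produce an idempotent ultrafilter $\alpha\in U(X_{k})$ compatible with the shifts, namely $\left(T^{*}\right)^{j}\alpha+\alpha=\alpha+\left(T^{*}\right)^{j}\alpha=\alpha$ for all $0\leq j\leq k-1$, and then run the Hindman-style greedy construction with starred sets, using the shift-compatibility to absorb the increments placed in the higher layers $T^{j-1}\mathbf{n_{i}}$. Your description of that second half is essentially right: the two-sided identities give that $\left\{ x:\left\{ y:T^{j}x+y\in A\right\} \in\alpha\right\} \in\alpha$ and $\left\{ x:\left\{ y:x+T^{j}y\in A\right\} \in\alpha\right\} \in\alpha$ whenever $A\in\alpha$, and the finitely many conditions accumulated at each stage keep the intersection in $\alpha$.

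The genuine gap is that you never establish the existence of such an $\alpha$, and the routes you float for doing so would not work as stated. An arbitrary idempotent of $\left(\overline{X_{k}},+\right)$ supplied by Ellis--Numakura will in general fail the identities for $j\geq1$: these are extra constraints that must be built into the choice of the compact subsemigroup before extracting the idempotent, not verified afterwards by ``a direct computation''; and minimality or membership in the smallest ideal is neither necessary nor sufficient for them. The actual construction (Gowers' Lemma 3, restated with a refinement as Lemma 2 of this paper) is an induction on $k$: having found a compatible idempotent $\alpha_{k-1}\in U\left(X_{k-1}\right)$, one passes to the nonempty closed subsemigroup $\left\{ \alpha\in U\left(X_{k}\right):T^{*}\alpha=\alpha_{k-1}\right\} $ (using that $T^{*}$ is a continuous surjective homomorphism), cuts it down further to the ultrafilters already absorbed by the lower-level idempotents on both sides, checks that what remains is still a nonempty compact subsemigroup, and only then applies Ellis--Numakura. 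Since this lemma is the entire content of the theorem beyond the $k=1$ Hindman case, your proposal leaves the crux unproved; you correctly flag it as ``the hard part'' but the candidate fixes you name do not close it.
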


Let us now describe some ultrafilter preliminaries. For any set $X$,
let $U\left(X\right)$ be the set of all ultrafilters on $X$. If
$\alpha\in U\left(X\right)$ , the symbol $\Lambda_{\alpha}$ is defined
as follows. If $P\left(x\right)$ is any proposition defined over
the set $X$, then by $\left(\Lambda_{\alpha}x\right)P\left(x\right)$
we mean $\left\{ x\in X:P\left(x\right)\text{ is true}\right\} \in\alpha.$
If $\left(X,\cdot\right)$ is a semigroup, then $\left(U\left(X\right),\cdot\right)$
is a compact semigroup where for $\alpha,\beta\in U\left(X\right)$,
$\alpha\cdot\beta=\left\{ A\subseteq X:\left(\Lambda_{\alpha}x\right)\left(\Lambda_{\beta}y\right)\left(x\cdot y\right)\in A\right\} .$
Note that for any semigroup $S$ and its subsemigroup $T,$ the embedding
map $i:T\rightarrow S$ can be extend continuously as $i:U\left(T\right)\rightarrow U\left(S\right)$.

Given $k\geq2,$ the induced shift map $T^{*}:U\left(X_{k}\right)\rightarrow U\left(X_{k-1}\right)$
is defined as follows. For any $\alpha\in U\left(X_{k}\right),$ $T^{*}\left(\alpha\right)=\left\{ A\subseteq X_{k-1}:\left(\Lambda_{\alpha}x\right)Tx\in A\right\} .$
This map $T^{*}$ is continuous. Given any $j<k\in\mathbb{N},$ define
$+:\left(X_{j},X_{k}\right)\rightarrow X_{k}$ by 
\[
\left(m_{1},\ldots,m_{j}\right)+\left(n_{1},\ldots,n_{k}\right)=\left(n_{1},\ldots,n_{k-j},m_{1}+n_{k-j+1},\ldots,m_{j}+n_{k}\right).
\]
 Similarly one can define $+:\left(X_{k},X_{j}\right)\rightarrow X_{k}$

This map induces the map $+:\left(U\left(X_{k}\right),U\left(X_{j}\right)\right)\rightarrow U\left(X_{k}\right)$
defined by 
\[
\alpha+\beta=\left\{ A\subseteq X_{k}:\left(\Lambda_{\alpha}x\in X_{k}\right)\left(\Lambda_{\beta}y\in X_{j}\right)\left(x+y\right)\in A\right\} .
\]
 Similarly define the shift map $S:\mathbb{N}^{k}\rightarrow\mathbb{N}^{k}$
by $S\left(n_{1},n_{2},\ldots,n_{k}\right)=\left(1,n_{1},n_{2},\ldots,n_{k-1}\right)$.
Let $Y_{k}=\mathbb{N}^{k}\setminus S\mathbb{N}^{k}=\left\{ \left(n_{1},n_{2},\ldots,n_{k}\right):n_{1}\neq1\right\} $.
Given any subset $B=\left\{ \mathbf{m_{i}}:i\in I\right\} \subset Y_{k}$,
the Gower product subspace generated by $B$ is the set of elements of $\mathbb{N}^{k}$
of the form $\prod_{j=1}^{k}\prod_{i\in B_{j}}S^{j-1}\mathbf{m_{i}}$,
where $B_{1},B_{2},\ldots,B_{k}$ are disjoint subsets of $I$ and
$B_{1}$is nonempty. Similarly we can define $S^{*}:U\left(Y_{k}\right)\rightarrow U\left(Y_{k-1}\right)$,
where the operation will be multiplicative. Both the maps are right
continuous. For details on ultrafilters the readers can see the nice textbook \cite{key-5}.

The main objective to prove Gower's theorem was to find out an idempotent
ultrafilter $\alpha$ in $U\left(X_{k}\right)$ such that $\left(T^{*}\right)^{j}\alpha+\alpha=\alpha+\left(T^{*}\right)^{j}\alpha=\alpha$
for each $0\leq j\leq k-1$.

The following lemma is refined version of \cite[Lemma 3]{key-3}.
\begin{lem}
For every $k\in\mathbb{N},$ there exists an ultrafilter $\alpha\in U\left(X_{k}\right)$
such that $\left(T^{*}\right)^{j}\alpha+\alpha=\alpha+\left(T^{*}\right)^{j}\alpha=\alpha$
for each $0\leq j\leq k-1$ and $\mathbb{N}^{k}\in\alpha$.
\end{lem}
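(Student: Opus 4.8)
The plan is to induct on $k$, at each stage producing an idempotent lying over the previously constructed one via $T^*$. For $k=1$ we have $X_1=\mathbb{N}$ and $\mathbb{N}^1=X_1$, so any idempotent of the compact right-topological semigroup $(U(\mathbb{N}),+)$ supplied by Ellis' theorem already meets the (vacuous) shift relations and contains $\mathbb{N}$. For the inductive step fix $\beta\in U(X_{k-1})$ with $(T^*)^i\beta+\beta=\beta+(T^*)^i\beta=\beta$ for $0\le i\le k-2$ and $\mathbb{N}^{k-1}\in\beta$; in particular $\beta$ is idempotent. Throughout I would use three structural facts, each checked first on points and then transported to ultrafilters by continuity: that $(X_k,+)$ is a semigroup in which $\mathbb{N}^k\subseteq X_k$ is a subsemigroup, so its closure $\overline{\mathbb{N}^k}=\{\alpha:\mathbb{N}^k\in\alpha\}$ is a compact right-topological subsemigroup of $U(X_k)$; that the truncation underlying $T$ is a homomorphism for all the relevant additions, so $T^*$ is a continuous homomorphism; and that truncation maps $\mathbb{N}^k$ onto $\mathbb{N}^{k-1}$, whence $T^*$ carries $\overline{\mathbb{N}^k}$ onto $\overline{\mathbb{N}^{k-1}}$.

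Consider the fibre $F=\{\alpha\in\overline{\mathbb{N}^k}:T^*\alpha=\beta\}$. It is nonempty by the surjectivity just noted, closed as a point-preimage intersected with a closed set, and a subsemigroup because $T^*(\alpha_1+\alpha_2)=\beta+\beta=\beta$ since $\beta$ is idempotent; hence $F$ is a compact right-topological semigroup. On $F$ every lower shift is pinned down, $(T^*)^j\alpha=(T^*)^{j-1}\beta=:\gamma_j$ for $1\le j\le k-1$, and these $\gamma_j$ form an absorbing chain $\gamma_i+\gamma_j=\gamma_j+\gamma_i=\gamma_{\min(i,j)}$, obtained by applying powers of $T^*$ to the relations defining $\beta$. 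Consequently, for an idempotent $\alpha\in F$ the whole list of required shift relations collapses to the single demand $\beta+\alpha=\alpha+\beta=\alpha$: once this holds, $\gamma_j+\alpha=(\gamma_j+\beta)+\alpha=\beta+\alpha=\alpha$ and symmetrically on the right, using $\gamma_j+\beta=\beta+\gamma_j=\beta$. Thus the task reduces to finding an idempotent $\alpha\in F$ absorbing $\beta=\gamma_1$ on both sides.

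The remaining point is the genuine obstacle: the right translation $\rho_\beta\colon\alpha\mapsto\alpha+\beta$ is continuous, so $\{\alpha:\alpha+\beta=\alpha\}$ is closed, but the left translation $\alpha\mapsto\beta+\alpha$ is not, so one cannot simply intersect in the two-sided condition and invoke Ellis. I would circumvent this exactly as in the standard domination argument for idempotents in \cite{key-5}. Since $\rho_\beta$ is continuous, $F+\beta=\rho_\beta(F)$ is a compact left ideal of $F$ (contained in $F$, as $T^*(\alpha+\beta)=\beta+T^*\beta=\beta$), and every $x\in F+\beta$ already satisfies $x+\beta=x$ because $\beta$ is idempotent. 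Choose a minimal left ideal $L\subseteq F+\beta$ and an idempotent $e\in L$; then $T^*e=\beta$, $e+\beta=e$, and $\mathbb{N}^k\in e$. Now set $\alpha:=\beta+e$. A direct computation from $e+\beta=e$ and the idempotency of $\beta$ and $e$ gives $\alpha+\alpha=\alpha$, $\beta+\alpha=\alpha$ and $\alpha+\beta=\alpha$, while $\mathbb{N}^k\in\alpha$ since adding positive tuples preserves positivity. Finally $T^*\alpha=T^*\beta+T^*e=T^*\beta+\beta=\beta$ by the homomorphism property together with the $i=1$ relation of $\beta$ when $k\ge3$, and trivially when $k=2$ because truncation erases the only coordinate that $\beta$ affects. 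This $\alpha$ satisfies every requirement, completing the induction. The only delicate step is the one just described; everything else is the routine verification that the point-level maps are homomorphisms and that the relevant sets are closed.
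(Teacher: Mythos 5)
Your proposal is correct, and it is exactly the argument the paper leaves to the reader: the paper's ``proof'' is a one-line deferral to Gowers' Lemma~3 with the extra hypothesis $\mathbb{N}^{k}\in\alpha$ carried through the induction, and your induction on $k$ --- fibre over $\beta$ inside $\overline{\mathbb{N}^{k}}$, the left ideal $F+\beta$ to secure right absorption, an idempotent $e$ there, and $\alpha=\beta+e$ to get two-sided absorption --- is the standard way that induction is carried out. In effect you have supplied the details the paper omits rather than taken a different route.
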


\begin{proof}
The proof is similar to the proof of \cite[Lemma 3]{key-3}, except that we have to assume that
extra assumption in the induction hypothesis, so we leave the proof
to the reader.
\end{proof}
\begin{lem}
For every $k\in\mathbb{N},$ the set 
\[
\overline{\left\{ \alpha:\left(T^{*}\right)^{j}\alpha+\alpha=\alpha+\left(T^{*}\right)^{j}\alpha=\alpha \, \text{for each}\,0\leq j\leq k-1 \right\} }
\]
is a left ideal of $\left(\beta\mathbb{N}^{k},\cdot\right).$
\end{lem}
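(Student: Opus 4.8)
The plan is to exploit the standard mechanism for producing left ideals in a compact right topological semigroup: a closed set absorbs left multiplication as soon as it absorbs left multiplication by the points of a dense subsemigroup, provided left translation by those points is continuous. Write $E$ for the set of ultrafilters $\alpha$ satisfying $(T^{*})^{j}\alpha+\alpha=\alpha+(T^{*})^{j}\alpha=\alpha$ for all $0\le j\le k-1$, so the lemma concerns $L=\overline{E}$. Since $(\beta\mathbb{N}^{k},\cdot)$ is right topological and each left translation $\lambda_{s}:\gamma\mapsto s\cdot\gamma$ by a point $s\in\mathbb{N}^{k}$ is continuous, the entire statement reduces to the single algebraic inclusion $\mathbb{N}^{k}\cdot E\subseteq\overline{E}$, which I isolate as the key step.

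Granting that inclusion, the left ideal property follows by a two-step continuity bootstrap. First, for $s\in\mathbb{N}^{k}$ and $p\in\overline{E}$, write $p=\lim_{i}\alpha_{i}$ with $\alpha_{i}\in E$; continuity of $\lambda_{s}$ gives $s\cdot p=\lim_{i}(s\cdot\alpha_{i})$, and since each $s\cdot\alpha_{i}\in\overline{E}$ and $\overline{E}$ is closed, $s\cdot p\in\overline{E}$, so $\mathbb{N}^{k}\cdot\overline{E}\subseteq\overline{E}$. Second, for arbitrary $q\in\beta\mathbb{N}^{k}$ and $p\in\overline{E}$, write $q=\lim_{s}s$ as a limit of points of the dense set $\mathbb{N}^{k}$ and use right continuity of $\rho_{p}:\gamma\mapsto\gamma\cdot p$ to obtain $q\cdot p=\lim_{s}(s\cdot p)\in\overline{E}$. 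Hence $\beta\mathbb{N}^{k}\cdot\overline{E}\subseteq\overline{E}$, which is exactly the left ideal property.

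The heart of the matter is therefore the invariance $s\cdot\alpha\in E$ for $s\in\mathbb{N}^{k}$ and $\alpha\in E$, i.e. that $s\cdot\alpha$ again satisfies $(T^{*})^{j}(s\cdot\alpha)+(s\cdot\alpha)=(s\cdot\alpha)+(T^{*})^{j}(s\cdot\alpha)=s\cdot\alpha$. I would establish this from two pointwise identities on $X_{k}$ and then lift them to ultrafilters through the defining equations of the induced operations: a commutation law $T^{*}(s\cdot\alpha)=\sigma(s)\cdot T^{*}\alpha$, with $\sigma(s)$ the multiplier on $X_{k-1}$ induced by $s$ under the shift, so that $(T^{*})^{j}(s\cdot\alpha)=\sigma^{(j)}(s)\cdot(T^{*})^{j}\alpha$; and the distributive law $s\cdot(\mu+\nu)=(s\cdot\mu)+(s\cdot\nu)$ for the induced $+$, descending from coordinatewise distributivity of the product over the shifted sum. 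Granting these, one computes $(T^{*})^{j}(s\cdot\alpha)+(s\cdot\alpha)=\sigma^{(j)}(s)\cdot(T^{*})^{j}\alpha+s\cdot\alpha=s\cdot\big((T^{*})^{j}\alpha+\alpha\big)=s\cdot\alpha$, and symmetrically in the other order, giving $s\cdot\alpha\in E$.

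The main obstacle I expect is precisely the bookkeeping hidden in the step $\sigma^{(j)}(s)\cdot(T^{*})^{j}\alpha+s\cdot\alpha=s\cdot((T^{*})^{j}\alpha+\alpha)$. After $T^{*}$ lowers the dimension and $+:(X_{k-j},X_{k})\to X_{k}$ glues the shorter tuple onto the appropriate block of the longer one, one must verify that the multiplier acting on the $(T^{*})^{j}\alpha$ factor agrees coordinate-by-coordinate with the multiplier $s$ acting on the $\alpha$ factor; this forces a careful matching of the conventions for $T$, for the partial addition $+$, and for the coordinatewise product, since the dropped coordinate must line up with the free block for the two multipliers to coincide. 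I would therefore devote the bulk of the argument to pinning down these alignments and to transporting the resulting pointwise identities up to $\beta\mathbb{N}^{k}$, after which the continuity bootstrap above completes the proof.
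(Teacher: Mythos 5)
Your overall framework coincides with the paper's: reduce the left-ideal property to the inclusion $\mathbb{N}^{k}\cdot E\subseteq\overline{E}$ via the two continuity steps (continuity of $\lambda_{s}$ for points $s\in\mathbb{N}^{k}$, then right continuity of $\rho_{p}$ and density of $\mathbb{N}^{k}$ in $\beta\mathbb{N}^{k}$), and then prove the pointwise invariance $s\cdot\alpha\in E$ by commuting $T^{*}$ past the multiplier and distributing the coordinatewise product over the partial addition. The bootstrap part is correct and standard. The difficulty is that the step you yourself flag as ``the main obstacle'' --- the alignment of the two multipliers --- is not mere bookkeeping: it fails. Writing $s=\left(s_{1},\ldots,s_{k}\right)$, the commutation law gives $\left(T^{*}\right)^{j}\left(s\cdot\alpha\right)=\left(s_{1},\ldots,s_{k-j}\right)\cdot\left(T^{*}\right)^{j}\alpha$, since $T^{j}\left(s\cdot z\right)=\left(0,\ldots,0,s_{1}z_{1},\ldots,s_{k-j}z_{k-j}\right)$ retains the \emph{initial} segment of $s$; but distributing $s$ over $x+T^{j}z=\left(x_{1},\ldots,x_{j},x_{j+1}+z_{1},\ldots,x_{k}+z_{k-j}\right)$ places the \emph{terminal} segment $\left(s_{j+1},\ldots,s_{k}\right)$ on the shifted block. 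Concretely,
\[
s\cdot x+T^{j}\left(s\cdot z\right)=\left(s_{1}x_{1},\ldots,s_{j}x_{j},s_{j+1}x_{j+1}+s_{1}z_{1},\ldots,s_{k}x_{k}+s_{k-j}z_{k-j}\right)
\]
while
\[
s\cdot\left(x+T^{j}z\right)=\left(s_{1}x_{1},\ldots,s_{j}x_{j},s_{j+1}x_{j+1}+s_{j+1}z_{1},\ldots,s_{k}x_{k}+s_{k}z_{k-j}\right),
\]
and these agree only when $s_{i}=s_{i+j}$ for all $i$; requiring this for every $1\leq j\leq k-1$ forces $s$ to be a constant tuple. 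Already for $k=2$, $j=1$ one gets $\left(s_{1}x_{1},s_{2}x_{2}+s_{1}z_{1}\right)$ versus $\left(s_{1}x_{1},s_{2}x_{2}+s_{2}z_{1}\right)$. Hence the identity $\sigma^{(j)}\left(s\right)\cdot\left(T^{*}\right)^{j}\alpha+s\cdot\alpha=s\cdot\left(\left(T^{*}\right)^{j}\alpha+\alpha\right)$ does not close up, and your argument as described only yields $\Delta\cdot\overline{E}\subseteq\overline{E}$ for the diagonal $\Delta=\left\{ \left(n,\ldots,n\right):n\in\mathbb{N}\right\} $, not $\mathbb{N}^{k}\cdot\overline{E}\subseteq\overline{E}$.

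You should know that the paper's own proof contains exactly the same unjustified passage (from $\bar{\mathbf{n}}x'+\left(T^{*}\right)^{j}\bar{\mathbf{n}}z'\in C$ to $x'+\left(T^{*}\right)^{j}z'\in\bar{\mathbf{n}}^{-1}C$), so you have correctly located the crux of the lemma. But as written your proposal defers precisely the step that does not go through; repairing it requires either a genuinely new idea or a weakening of the statement (e.g.\ replacing $\left(\beta\mathbb{N}^{k},\cdot\right)$ by the closure of the diagonal, with corresponding changes downstream in Theorem 4).
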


\begin{proof}
Let $\varGamma=\overline{\left\{ \alpha:\left(T^{*}\right)^{j}\alpha+\alpha=\alpha+\left(T^{*}\right)^{j}\alpha=\alpha\right\} }$.
If we can show that $\mathbb{N}^{k}\cdot\Gamma\subseteq\Gamma,$ then
we have the required result. Let $\gamma\in\Gamma$. Now $A\in\mathbf{\bar{n}}\gamma$
if and only if $\mathbf{\bar{n}}^{-1}A\in\gamma.$ Pick $\alpha\in U\left(X_{k}\right)$
such that $\left(T^{*}\right)^{j}\alpha+\alpha=\alpha+\left(T^{*}\right)^{j}\alpha=\alpha$
and $\mathbf{\bar{n}}^{-1}A\in\alpha.$ Now 
\[
C\in\bar{\mathbf{n}}\alpha+\left(T^{*}\right)^{j}\bar{\mathbf{n}}\alpha
\]
iff 
\[
\left(\Lambda_{\bar{\mathbf{n}}\alpha}x\right)\left(\Lambda_{\left(T^{*}\right)^{j}\left(\bar{\mathbf{n}}\alpha\right)}y\right)\left(x+y\right)\in C
\]
iff 
\[
\left(\Lambda_{\bar{\mathbf{n}}\alpha}x\right)A\left(\Lambda_{\bar{\mathbf{n}}\alpha}z\right)\left(x+\left(T^{*}\right)^{j}z\right)\in C
\]
iff
\[
\left\{ x:\left\{ z:\left(x+\left(T^{*}\right)^{j}z\right)\in C\right\} \in\bar{\mathbf{n}}\alpha\right\} \in\bar{\mathbf{n}}\alpha
\]
iff
\[
\bar{\mathbf{n}}^{-1}\left\{ x:\bar{\mathbf{n}}^{-1}\left\{ z:\left(x+\left(T^{*}\right)^{j}z\right)\in C\right\} \in\alpha\right\} \in\alpha
\]
iff 
\[
\left\{ x':\left\{ z':\bar{\mathbf{n}}x'+\left(T^{*}\right)^{j}\bar{\mathbf{n}}z'\in C\right\} \in\alpha\right\} \in\alpha
\]
iff
\[
\left(\Lambda_{\alpha}x'\right)\left(\Lambda_{\alpha}z'\right)\left(\bar{\mathbf{n}}x'+\left(T^{*}\right)^{jA\in\delta}\bar{\mathbf{n}}z'\right)\in C
\]
iff 
\[
\left(\Lambda_{\alpha}x'\right)\left(\Lambda_{\alpha}z'\right)\left(x'+\left(T^{*}\right)^{j}z'\right)\in\bar{\mathbf{n}}^{-1}C
\]
iff
\[
\bar{\mathbf{n}}^{-1}C\in\alpha+\left(T^{*}\right)^{j}\alpha A\in\delta=\alpha
\]
iff 
\[
C\in\bar{\mathbf{n}}\alpha.
\]

Hence $\mathbf{\bar{n}}^{-1}A\in\alpha$ implies $A\in\mathbf{\bar{n}}\alpha$
implies $A\in\bar{\mathbf{n}}\alpha+\left(T^{*}\right)^{j}\bar{\mathbf{n}}\alpha$,
where $\left(T^{*}\right)^{j}\bar{\mathbf{n}}\alpha+\bar{\mathbf{n}}\alpha=\bar{\mathbf{n}}\alpha+\left(T^{*}\right)^{j}\bar{\mathbf{n}}\alpha=\bar{\mathbf{n}}\alpha.$
So, $\mathbf{\bar{n}}\gamma\in\Gamma.$
\end{proof}

\begin{thm}
Let $k,r\in\mathbb{N}$ and $X_{k}=\bigcup_{i=1}^{r}C_{i}$ be a $r$-partition
of $X_{k}$. Thus there exist two sequences $B=\left\{ \mathbf{m_{i}}:i\in I\right\} \subset X_{k}$
and $\langle y_{n}\rangle_{n\in\mathbb{N}}\subset\mathbb{N}^{k}$
such that the Gower sum subspace generated by $B$ and $FP\left(\langle y_{n}\rangle_{n\in\mathbb{N}}\right)$ are both in the same partition. 
\end{thm}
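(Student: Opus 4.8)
The plan is to transplant the Bergelson--Hindman argument into the Gower setting, using Lemma 3 as the bridge between the additive (Gower--sum) and the multiplicative (finite--product) worlds. The carrier of the sum structure is the set
\[
\Gamma'=\left\{\alpha\in\beta\mathbb{N}^{k}:\left(T^{*}\right)^{j}\alpha+\alpha=\alpha+\left(T^{*}\right)^{j}\alpha=\alpha\text{ for each }0\leq j\leq k-1\right\},
\]
which is nonempty by Lemma 2 (the clause $\mathbb{N}^{k}\in\alpha$ is exactly what lets us regard these ultrafilters as living in $\beta\mathbb{N}^{k}$), and whose closure $\Gamma=\overline{\Gamma'}$ is, by Lemma 3, a left ideal of the compact right--topological semigroup $\left(\beta\mathbb{N}^{k},\cdot\right)$. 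The strategy is to find a single multiplicative idempotent inside $\Gamma$: its members will furnish the finite products, while its position in the closure of $\Gamma'$ will let a nearby additive idempotent furnish the Gower sum subspace \emph{in the same color}.

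First I would locate a multiplicative idempotent in $\Gamma$. Since $\Gamma$ is a nonempty left ideal of $\left(\beta\mathbb{N}^{k},\cdot\right)$, it contains a minimal left ideal $L$ of that semigroup; by Ellis' theorem $L$ carries an idempotent $p=p\cdot p$, and $L\subseteq\Gamma$ gives $p\in\Gamma$. Next I would synchronize the colors. Because $\mathbb{N}^{k}=\bigcup_{i=1}^{r}\left(C_{i}\cap\mathbb{N}^{k}\right)$ and $p$ is an ultrafilter on $\mathbb{N}^{k}$, some class $C:=C_{i_{0}}$ satisfies $C\cap\mathbb{N}^{k}\in p$. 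As a multiplicative idempotent, $p$ yields by the Galvin--Glazer/Hindman theorem a sequence $\langle y_{n}\rangle_{n\in\mathbb{N}}\subset\mathbb{N}^{k}$ with $FP\left(\langle y_{n}\rangle_{n\in\mathbb{N}}\right)\subseteq C\cap\mathbb{N}^{k}\subseteq C$. On the other hand, $p\in\Gamma=\overline{\Gamma'}$ together with $C\cap\mathbb{N}^{k}\in p$ means the basic clopen set $\left\{q:C\cap\mathbb{N}^{k}\in q\right\}$ is a neighborhood of $p$, so it meets $\Gamma'$; choosing $\alpha\in\Gamma'$ with $C\in\alpha$ and invoking the ultrafilter mechanism behind Theorem 1, the class $C$ contains a Gower sum subspace generated by some $B=\left\{\mathbf{m_{i}}:i\in I\right\}\subset X_{k}$. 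Both structures then lie in the single class $C$, which is the desired conclusion.

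The hard part will be the extraction step ``$\alpha\in\Gamma'$ and $C\in\alpha$ imply $C$ contains a full Gower sum subspace,'' since this is precisely the combinatorial engine of Gower's theorem and must be cited or re-derived with care rather than treated as formal. A second point demanding attention is the bookkeeping of two operations on overlapping carriers: the sum $+$ and the shifts $T^{*}$ live naturally on $U\left(X_{k}\right)$, while the product $\cdot$ lives on $\beta\mathbb{N}^{k}$. The inclusion $\mathbb{N}^{k}\subseteq X_{k}$, reinforced by the clause $\mathbb{N}^{k}\in\alpha$ from Lemma 2, is what keeps $\Gamma$ simultaneously a set of additive idempotents and a genuine left ideal of $\left(\beta\mathbb{N}^{k},\cdot\right)$; verifying that these two roles are compatible—so that the neighborhood/closure argument in $\beta\mathbb{N}^{k}$ really returns an $\alpha$ satisfying the additive relations—is where I expect the main friction to lie.
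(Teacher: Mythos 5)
Your proposal is correct and follows essentially the same route as the paper: take a multiplicative idempotent in the left ideal $\Gamma$ via Ellis' theorem, pick the color class $C_{i}$ it contains, extract $FP\left(\langle y_{n}\rangle\right)$ from the multiplicative idempotency, and extract the Gower sum subspace from the additive structure. In fact your explicit step of passing from the idempotent $p\in\overline{\Gamma'}$ to a genuine additive Gower ultrafilter $\alpha\in\Gamma'$ with $C\in\alpha$ via the clopen neighborhood $\left\{ q:C\cap\mathbb{N}^{k}\in q\right\}$ is slightly more careful than the paper, which applies the Gower extraction to $\delta$ directly without noting that $\delta$ lies only in the closure.
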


\begin{proof}
As $\varGamma$ is a left ideal of $\left(\beta\mathbb{N}^{k},\cdot\right),$
there will be an idempotent $\delta\in\varGamma$ (as left ideals are closed, this follows from \cite{key-2}). As $\mathbb{N}^{k}\in\delta,$
we have $C_i\cap\mathbb{N}^{k}\in\delta$ for some $i\in \{1,2,\ldots,r\}$. As $C_i\in\delta$, we have
an infinite subset $B=\left\{ \mathbf{m_{i}}:\mathbf{i}\in I\right\} $
which generate a subspace contained in $C_i$. Now $\delta$ is an multiplicative idempotent,
so $C_i\cap\mathbb{N}^{k}$ will contain configuration of the form $FP\left(\langle y_{n}\rangle_{n\in\mathbb{N}}\right)$
for some sequence $\langle y_{n}\rangle_{n\in\mathbb{N}}$ in $\mathbb{N}^{k}.$
This completes the proof.
\end{proof}
We conjecture that both of the Gower sum and product subspace will be monochromatic.

\begin{Conj}
Let $k,r\in\mathbb{N}$ and $X_{k}=\bigcup_{i=1}^{r}C_{i}$ be a $r$-partition
of $X_{k}$. Thus there exist two subsets $A=\left\{ \mathbf{n_{i}}:i\in I\right\} \subset X_{k}$
and $B=\left\{ \mathbf{m_{i}}:i\in I\right\} \subset X_{k}$ such
that the Gower sum subspace generated by $A$ and the product subspace generated
by $B$ are both in the same color.
\end{Conj}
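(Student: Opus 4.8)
The plan is to follow the proof of the preceding theorem verbatim on the additive side, and to strengthen it on the multiplicative side. In that proof the single ultrafilter $\delta$ does double duty: lying in the closed set $\Gamma$ yields, via the neighbourhood argument, a genuine Gower additive idempotent $\alpha$ with $C_i\in\alpha$ and hence a Gower sum subspace inside $C_i$; while being an ordinary idempotent of $(\beta\mathbb{N}^k,\cdot)$ gives only a set of the form $FP(\langle y_n\rangle)$. To obtain a full Gower \emph{product} subspace instead, I would ask $\delta$ to be a Gower multiplicative idempotent, i.e.\ to satisfy $(S^*)^j\delta\cdot\delta=\delta\cdot(S^*)^j\delta=\delta$ for each $0\le j\le k-1$. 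One cannot demand that $\delta$ be at once a Gower additive and a Gower multiplicative idempotent: already the $j=0$ requirements $\delta+\delta=\delta$ and $\delta\cdot\delta=\delta$ fail simultaneously, since projecting to the first coordinate would yield an ultrafilter on $\mathbb{N}$ that is both additively and multiplicatively idempotent. Hence, exactly as in the preceding theorem, the additive structure must be reached through membership in $\Gamma$, and only the multiplicative structure is supplied by a genuine idempotent, now a Gower one.

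Everything therefore reduces to a single assertion: the closed left ideal $\Gamma$ of $(\beta\mathbb{N}^k,\cdot)$ contains a Gower multiplicative idempotent. Being a left ideal, $\Gamma$ is a closed subsemigroup of $(\beta\mathbb{N}^k,\cdot)$; it already contains ordinary multiplicative idempotents, and indeed a minimal left ideal $L_0$ of $(\beta\mathbb{N}^k,\cdot)$, so it meets the minimal ideal $K(\beta\mathbb{N}^k,\cdot)$. What must be added is the Gower condition on the shifts. The natural route is to re-run Gower's inductive construction of a multiplicative idempotent, the multiplicative analogue of the refined lemma, built from the shift $S^*:U(Y_k)\to U(Y_{k-1})$, entirely inside $\Gamma$, so that the idempotent it produces is simultaneously Gower-multiplicative and a member of $\Gamma$ (one would also carry along $Y_k\in\delta$, the multiplicative counterpart of the condition $\mathbb{N}^k\in\alpha$).

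The hard part will be reconciling the two shift conventions, and this is where I expect the argument to resist. Gower's multiplicative induction descends in $k$ through $S^*$, which pads a coordinate with the multiplicative identity $1$, whereas $\Gamma$ is cut out by conditions phrased through the additive shift $T^*$, which pads with $0$. To localise the construction inside $\Gamma$ one needs a compatibility lemma asserting that $S^*$ sends the level-$k$ instance of $\Gamma$ into the level-$(k-1)$ instance, and that at each inductive stage the minimal multiplicative subsemigroup used can be chosen within $\Gamma$; equivalently, that the defining property of $\Gamma$ is preserved under the multiplicative operations appearing in the construction. Because $0$-padding and $1$-padding permute and fill coordinates in genuinely different ways, this preservation is not automatic. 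Establishing it, or alternatively producing a Gower multiplicative idempotent directly inside a prescribed minimal left ideal $L_0\subseteq\Gamma$, is the real obstacle, and is presumably the reason the statement is recorded here only as a conjecture.
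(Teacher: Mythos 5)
There is no proof to compare against: the paper records this statement only as a Conjecture and offers no argument for it. Your proposal does not close it either, and by your own admission it reduces the conjecture to a new unproven assertion, namely that the closed left ideal $\Gamma$ of $\left(\beta\mathbb{N}^{k},\cdot\right)$ contains a \emph{Gower} multiplicative idempotent, i.e.\ some $\delta$ with $\left(S^{*}\right)^{j}\delta\cdot\delta=\delta\cdot\left(S^{*}\right)^{j}\delta=\delta$ for all $0\leq j\leq k-1$. That is precisely the content of the conjecture once the (correct) skeleton of the preceding theorem's proof is in place: the additive half via the closure defining $\Gamma$, the multiplicative half via the idempotent itself. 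Everything you write before that point is a sound reading of the paper's Theorem 4 --- including the observation that one cannot ask for a single $\delta$ that is both a Gower additive and a Gower multiplicative idempotent, since the $j=0$ conditions alone would project to an ultrafilter on $\mathbb{N}$ idempotent for both $+$ and $\cdot$, which is known not to exist --- but none of it advances past where the paper already stands.

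The concrete gap is the compatibility you flag and then leave open: $\Gamma$ is carved out by conditions on the additive shift $T^{*}$ (padding with $0$ inside $\mathbb{N}_{0}^{k}$), while Gower's multiplicative induction lives on $Y_{k}$ and descends through $S^{*}$ (padding with $1$ inside $\mathbb{N}^{k}$). You would need, at minimum, that $S^{*}$ maps the level-$k$ version of $\Gamma$ into the level-$(k-1)$ version and that each closed subsemigroup used in the inductive step can be intersected with $\Gamma$ nontrivially; neither is argued, and there is no evident reason the $0$-padded and $1$-padded structures cooperate. There is also a definitional mismatch you only partly address: the Gower product subspace is defined for subsets of $Y_{k}$, while the conjecture asks for $B\subset X_{k}$, so the construction must in addition produce $\delta$ concentrating on $X_{k}\cap Y_{k}$. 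Until the existence of such a $\delta$ is established, the proposal is a plausible strategy, not a proof --- which is consistent with the paper leaving the statement as a conjecture.
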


\vspace{.3in}

\end{document}